\newtheorem{THM}{Theorem}
\newtheorem{LEM}{Lemma}
\newcommand\abs[1]{\lvert #1\rvert}
\newcommand\F{\mathbb F}
\begin{document}
\title{An upper bound on tricolored ordered sum-free sets}
\author{Taegyun Kim\thanks{\texttt{ktg11k@kaist.ac.kr}} }
\author{Sang-il Oum\thanks{\texttt{sangil@kaist.edu}}
\thanks{This work was supported by the National Research Foundation of Korea (NRF) grant funded by the Korea government (MSIT) (No. NRF-2017R1A2B4005020).}}
\affil{Department of Mathematical Sciences, KAIST \\ Daejeon, South Korea.}
\date\today
\maketitle

\begin{abstract}
  We present a strengthening of the lemma on the lower bound of the
  slice rank by Tao~\cite{Tao2016} 
  motivated by the Croot-Lev-Pach-Ellenberg-Gijswijt bound on cap
  sets~\cite{CLP2016,EG2017}.
  The Croot-Lev-Pach-Ellenberg-Gijswijt method and 
  the lemma of Tao are based on the fact that the rank of a diagonal
  matrix is equal to the number of non-zero diagonal entries.
  Our lemma is based on the rank of upper-triangular matrices.
  This stronger lemma allows us to prove the following extension 
  of the Ellenberg-Gijswijt result \cite{EG2017}.
  A \emph{tricolored ordered sum-free set} in $\F_p^n$
  is a collection $\{(a_i,b_i,c_i):i=1,2,\ldots,m\}$ of ordered triples in $(\F_p^n )^3$ such that 
  $a_i+b_i+c_i=0$
  and if $a_i+b_j+c_k=0$, then $i\le j\le k$.
  By using the new lemma, we present an upper bound on the size of a tricolored ordered sum-free set in $\mathbb F_p^n$.
\end{abstract}
\section{Introduction}
Let $\F$ be a field.  A function $f:A^k\to \F$ is called a \emph{slice} if it can be written in the form
\[f(x_1,x_2,\ldots,x_k)=h(x_i) g(x_1,\ldots,x_{i-1},x_{i+1},\ldots,x_k)\]
for functions $h:A\to \F$ and $g:A^{k-1}\to \F$ with some $1\le i\le k$. 
The \emph{slice rank} of a function $f: A^k\to \F$, introduced by Tao~\cite{Tao2016}, is the minimum $k$ such that $f$
can be written as a sum of $k$ slices. 
If $k=2$, then the slice rank is equivalent to the usual concept of the matrix rank.
Then he showed the following:
\begin{LEM}[Tao~\cite{Tao2016}]\label{lem:tao}
  Let $A$ be a finite set.
  Let $f:A^k\to \F$  be  a function such that 
  $f(x_1,x_2,\ldots,x_k)\neq 0$ implies that $x_1=x_2=\cdots=x_k$.
  Then the slice rank of $f$ is equal to 
  $\abs{\{x\in A: f(x,x,\ldots,x)\neq 0\}}$.
\end{LEM}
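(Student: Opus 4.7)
My plan is to prove the two inequalities separately; the upper bound is nearly trivial and the real content lies in the lower bound.

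For the upper bound, set $D=\{x\in A:f(x,\ldots,x)\neq 0\}$. The hypothesis forces $f(x_1,\ldots,x_k)=0$ whenever $x_1\notin D$: either all the $x_i$ coincide (putting $x_1\in D$, a contradiction) or they do not and diagonality kills $f$. Hence
\[
f(x_1,\ldots,x_k)=\sum_{x\in D}\mathbf{1}[x_1=x]\cdot f(x,x_2,\ldots,x_k),
\]
which exhibits $f$ as a sum of $\abs{D}$ slices.

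For the lower bound I will induct on $k$, taking the base case $k=2$ to be the standard fact that a diagonal matrix has rank equal to its number of nonzero diagonal entries. Assume for contradiction a decomposition $f=\sum_{j=1}^r h_j(x_{i_j})\,g_j(x_1,\ldots,\widehat{x}_{i_j},\ldots,x_k)$ with $r<\abs{D}$, and let $r_1$ denote the number of slices with $i_j=1$. The subspace
\[
W=\Bigl\{v\in\F^D:\sum_{x\in D}v(x)h_j(x)=0\text{ for all }j\text{ with }i_j=1\Bigr\}
\]
has codimension at most $r_1$ in $\F^D$; putting a basis in row echelon form produces some $v\in W$ with $\abs{\operatorname{supp}(v)}\geq\abs{D}-r_1$.

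Contract with $v$: set $\tilde f(x_2,\ldots,x_k)=\sum_{x\in D}v(x)f(x,x_2,\ldots,x_k)$. Substituting the slice decomposition, the $r_1$ slices with $i_j=1$ contribute $\bigl(\sum_x v(x)h_j(x)\bigr)g_j=0$, while each remaining slice descends to a slice in the $k-1$ variables $x_2,\ldots,x_k$, so $\tilde f$ has slice rank at most $r-r_1$. Restricted to $D^{k-1}$, however, $\tilde f$ is supported exactly on the diagonal and takes the nonzero value $v(x)f(x,\ldots,x)$ at $(x,\ldots,x)$ for each $x\in\operatorname{supp}(v)$, so the inductive hypothesis yields slice rank at least $\abs{\operatorname{supp}(v)}\geq\abs{D}-r_1$. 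Combining gives $r\geq\abs{D}$, a contradiction.

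The main obstacle is choosing the right reduction: one must turn a $k$-variable problem into a $(k-1)$-variable problem while preserving both the upper bound from the slice decomposition and the diagonal structure of $f$. Summing against a $v$ annihilating the ``variable $1$'' coefficient functions achieves this, provided $v$ can be taken with large support, which is exactly the row-echelon-form lemma invoked above. Once this device is in place, the induction closes mechanically.
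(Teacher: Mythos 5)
Your proof is correct and follows essentially the same route as the paper's proof of its Lemma~\ref{lem:main} (of which Lemma~\ref{lem:tao} is the diagonal special case): induction on $k$ with the $k=2$ matrix-rank base case, an annihilator subspace $W$ of the coefficient functions attached to one variable, a vector in $W$ with large support (the paper phrases this as a function equal to $1$ on a set $A'$ of size $\dim W$), and contraction against it to drop to $k-1$ variables. You additionally supply the easy upper bound, which is needed for the equality claimed in Lemma~\ref{lem:tao} and which the paper's lower-bound-only argument does not address.
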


This lemma was formulated from the proofs of the recent breakthrough on
cap sets by Croot, Lev, and Pach~\cite{CLP2016} and Ellenberg and
Gijswijt~\cite{EG2017}. This powerful new method led many results in
extremal combinatorics within a short period of time.

Note that when $k=2$, Lemma~\ref{lem:tao} is about the rank of diagonal matrices and it is immediate that the rank of the diagonal matrices is equal to the number of non-zero diagonal entries.
Then it is natural to wonder whether there is any formulation to use upper-triangular matrices as a basis step. Here is such a generalization.
\begin{LEM}\label{lem:main}
  Let $\preceq$ be a linear ordering of $A$.
  Let $f:A^k\to \F$  be  a function such that 
  $f(x_1,x_2,\ldots,x_k)\neq 0$ implies that $x_1\preceq x_i\preceq x_k$ for all $i=2,3,\ldots,k-1$.
  Then the slice rank of $f$ is at least
  \[\abs{\{x\in A: f(x,x,\ldots,x)\neq 0\}}.\]
\end{LEM}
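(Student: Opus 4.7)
I would prove Lemma~\ref{lem:main} by induction on $k$, adapting Tao's proof of Lemma~\ref{lem:tao} but performing each reduction at a middle coordinate $m\in\{2,\ldots,k-1\}$ so as to preserve the upper-triangular structure. The base case $k=2$ is the statement that every upper-triangular matrix (with respect to $\preceq$) has rank at least $\abs{D}$, which follows by restricting to the $\abs{D}\times\abs{D}$ submatrix indexed by $D$: this submatrix is upper-triangular with nonzero diagonal entries, hence invertible.

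For the inductive step at $k\ge 3$, fix a slice rank decomposition $f=\sum_{i=1}^k\sum_{j=1}^{r_i}h_{ij}(x_i)g_{ij}(\hat x_i)$ with total $r=\sum_i r_i$, and pick a middle index $m$, say $m=2$. Consider the subspace $W=\{v\in\F^D:\sum_{a\in D}v(a)h_{mj}(a)=0\text{ for all }j=1,\ldots,r_m\}$, whose dimension is at least $\abs{D}-r_m$. The standard fact that every $d$-dimensional subspace of $\F^D$ contains a vector of support at least $d$ (obtained by selecting an invertible $d\times d$ column submatrix of a basis of $W$ and solving a linear system) yields $v\in W$ with $\abs{\mathrm{supp}(v)}\ge\abs{D}-r_m$; we may assume $r_m<\abs{D}$, for otherwise $r\ge\abs{D}$ already. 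Define
\[
\tilde f(x_1,x_3,\ldots,x_k)=\sum_{a\in D}v(a)\,f(x_1,a,x_3,\ldots,x_k).
\]
The choice of $v$ annihilates every slice with special variable $m$, and each remaining slice, after summing in $a$, becomes a slice in the remaining $k-1$ variables with the same special variable; hence the slice rank of $\tilde f$ is at most $r-r_m$.

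It remains to check that $\tilde f$ fits the inductive hypothesis. A nonzero value of $\tilde f(x_1,x_3,\ldots,x_k)$ comes from a nonzero summand, which forces $x_1\preceq a\preceq x_k$ and $x_1\preceq x_i\preceq x_k$ for $i\in\{3,\ldots,k-1\}$; the latter inequalities are precisely the upper-triangular hypothesis for a $(k-1)$-variable function with endpoints $x_1$ and $x_k$. Setting all remaining coordinates of $\tilde f$ equal to $a'$ forces $a'\preceq a\preceq a'$ and hence $a=a'$ in each nonvanishing term, so $\tilde f(a',\ldots,a')=v(a')f(a',\ldots,a')$, and the diagonal support of $\tilde f$ equals $\mathrm{supp}(v)$. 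The inductive hypothesis then gives slice rank of $\tilde f$ at least $\abs{\mathrm{supp}(v)}\ge\abs{D}-r_m$, which combined with the upper bound $r-r_m$ yields $\abs{D}\le r$. The main delicate point is exactly the structural check that evaluation at a middle coordinate preserves both the upper-triangular hypothesis and the diagonal bookkeeping---this is where the condition $x_1\preceq x_m\preceq x_k$ is used in an essential way---while the rest is routine linear algebra and standard Tao-style slice counting.
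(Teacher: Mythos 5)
Your proof is correct and follows essentially the same route as the paper: induction on $k$, contracting the second (middle) coordinate against a vector in the annihilator of the corresponding slice functions chosen to have large support, so that the contracted function inherits both the ordering hypothesis and a large diagonal. The only cosmetic difference is that the paper first normalizes to the case $D=A$ and picks $h$ equal to $1$ on a set $A'$ of size $\dim W$, which is the same device as your large-support vector $v$.
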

Though the proof based on the induction is very similar to the proof of Lemma~\ref{lem:tao} by Tao, we present its proof in Section~\ref{sec:proof}.
As Lemma~\ref{lem:main} includes Lemma~\ref{lem:tao},
it implies all other results previously proven by using Lemma~\ref{lem:tao}.
In the next section, we present an application of the new lemma.
We hope to find further interesting applications.

\section{Application: Tricolored ordered sum-free sets}\label{sec:appl}
We present one application of this new lemma. 
Blasiak~et al.~\cite{BCCGNSU2017} and independently Alon (in \cite{BCCGNSU2017})
observed that the result of Ellenberg and Gijswijt~\cite{EG2017} can
be extended to tricolored sum-free sets in $\F_p^n$.
A \emph{tricolored sum-free set} in an abelian group $H$ is a
collection $\{(a_i,b_i,c_i)\}_{i=1}^m$ of ordered triples in $H$ such
that $a_i+b_j+c_k=0$ if and only if $i=j=k$.
We will extend it further to tricolored \emph{ordered} sum-free sets.

A \emph{tricolored ordered sum-free set} in an abelian group $H$ is a collection $\{(a_i,b_i,c_i)\}_{i=1}^m$ of ordered triples in $H$ such
that
\begin{enumerate}[(i)]
\item $a_i+b_i+c_i=0$ for all $i=1,2,\ldots,m$,
\item if $a_i+b_j+c_k=0$, then  $i\le j\le k$.
\end{enumerate}
We remark that a tricolored sum-free set is a tricolored ordered sum-free set such that $a_i+b_j+c_k=0$ if and only if $i=j=k$. 
We prove that the same upper bound can be achieved for tricolored ordered
sum-free sets of $\F_p^n$, as it was done for 
cap sets of $\F_p^n$ (Ellenberg and Gijswijt~\cite{EG2017}, see
Tao~\cite{Tao2016})
and for  
tricolored sum-free sets
of $\F_p^n$ by Blasiak~et al.~\cite{BCCGNSU2017} and independently Alon (in \cite{BCCGNSU2017}).
\begin{THM}
  If $\{(a_i,b_i,c_i)\}_{i=1}^m$ is a tricolored ordered sum-free set in $\F_p^n$, then $ m\le 3N$
  where $N$ is the number of monomials of total degree at most $(p-1)n/3$
  and in which each variable has degree at most $p-1$.

  In other words, \[N=\sum \frac{n!}{n_0!n_1!\cdots n_{p-1}!}\]
  where the sum is taken over all non-negative integers $n_0$,$n_1$, $\ldots$, $n_{p-1}$ such that $ n_0+n_1+\cdots+n_{p-1}=n$ and 
  $n_1+2n_2+\cdots+(p-1)n_{p-1}\le (p-1)n/3 $.
\end{THM}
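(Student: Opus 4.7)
The plan is to follow the Croot--Lev--Pach--Ellenberg--Gijswijt template and apply Lemma~\ref{lem:main} (instead of Lemma~\ref{lem:tao}) to the slice-rank analysis of the natural indicator polynomial. Let $A=\{1,2,\ldots,m\}$ with its usual order and define $F\colon A^3\to\F_p$ by
\[
F(i,j,k)=\prod_{\ell=1}^{n}\bigl(1-((a_i)_\ell+(b_j)_\ell+(c_k)_\ell)^{p-1}\bigr).
\]
By Fermat's little theorem each factor equals $1$ if the corresponding coordinate of $a_i+b_j+c_k$ vanishes and equals $0$ otherwise, so $F(i,j,k)\neq 0$ if and only if $a_i+b_j+c_k=0$ in $\F_p^n$.

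First I would feed $F$ into Lemma~\ref{lem:main}. Whenever $F(i,j,k)\neq 0$, condition~(ii) of the tricolored ordered sum-free definition gives $i\le j\le k$, which is exactly the chain $x_1\preceq x_2\preceq x_3$ demanded by the hypothesis when $k=3$. Condition~(i) further ensures that $F(i,i,i)=1$ for every $i\in A$. Lemma~\ref{lem:main} therefore yields the lower bound
\[
\text{slice rank of }F\ \ge\ m.
\]

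Next I would derive the matching upper bound of $3N$ by the familiar polynomial-degree decomposition. Expanded in the $3n$ variables $(x_\ell,y_\ell,z_\ell)$, the polynomial $F$ has total degree at most $(p-1)n$ with each variable of degree at most $p-1$. For every monomial one of the three blocks of variables carries a partial degree sum of at most $(p-1)n/3$; assign each monomial to one such block (breaking ties deterministically, say $x$ before $y$ before $z$), and then collect the monomials in each class by the exponent vector of the chosen block. The $x$-class then reads $\sum_\alpha x^\alpha g_\alpha(y,z)$, which is a sum of at most $N$ slices of the form $h(x_1)g(x_2,x_3)$, and similarly for the $y$- and $z$-classes. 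Adding the three contributions gives slice rank at most $3N$, and combining with the lower bound forces $m\le 3N$.

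The main obstacle is purely conceptual: recognizing that the inequality chain supplied by condition~(ii) of an ordered sum-free set is exactly the triangular support condition needed by Lemma~\ref{lem:main}. Everything else---the $1-y^{p-1}$ indicator, the splitting of monomials into three slice families by smallest block-degree, and the count of admissible exponent vectors giving $N$---is a direct transposition of the Ellenberg--Gijswijt and Blasiak et al.\ computations for cap sets and tricolored sum-free sets, and so will not introduce any new difficulty.
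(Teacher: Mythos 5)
Your proposal is correct and takes essentially the same approach as the paper: the same indicator polynomial $\prod_{\ell=1}^{n}\bigl(1-((a_i)_\ell+(b_j)_\ell+(c_k)_\ell)^{p-1}\bigr)$, the same application of Lemma~\ref{lem:main} (using condition (ii) for the triangular support and condition (i) for the nonzero diagonal) to get slice rank at least $m$, and the same three-way monomial split for the upper bound $3N$. If anything your write-up is cleaner, since the paper's version contains a few typographical slips (e.g.\ writing $(p-1)/3$ where $(p-1)n/3$ is meant).
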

\begin{proof}
  Let $A=\{1,2,\ldots,m\}^n$.
  and 
  let
  \[f(x,y,z)= \prod_{\ell=1}^n \left( 
      1- (a_{x_\ell} +b_{y_\ell}+c_{z_\ell})^{p-1}
      \right).\]
  Then $f(x,x,x)=1$ for all $x\in A$
  and if $f(x,y,z)\neq 0$, then $x\le y\le z$.
  By Lemma~\ref{lem:main}, the slice rank of $f$ is at least $m$.

  Now let us show that the slice rank of $f$ is at most $3N$.
  The next steps are now routine, as it is done in Tao~\cite{Tao2016}.
  We expand $f$ as 
  \[
    f(x,y,z)= \sum_{i,j,k} a_{x_1}^{i_1} \cdots a_{x_n}^{i_n}
    b_{y_1}^{j_1} \cdots b_{y_n}^{j_n}
    c_{z_1}^{k_1} \cdots c_{z_n}^{k_n}\]
  and collect terms based on whether $i_1+\cdots+i_n\le (p-1)/3$,
  $j_1+\cdots+j_n\le (p-1)/3$, or
  $k_1+\cdots+k_n\le (p-1)/3$.
  Note that $N$ is equal to the number of tuples $(i_1,i_2,\ldots,i_n)$
  of non-negative integers such that $i_1+i_2+\cdots+i_n\le 3(p-1)/n$
  and $i_1,i_2,\ldots,i_n\le p-1$.
  
  Then $f$ can be written as a sum of at most $3N$ slices, 
  where each slice has a term $a_{x_1}^{i_1} \cdots a_{x_n}^{i_n}$
  for $i_1+\cdots +i_n\le (p-1)/3$ and $0\le i_1,i_2,\ldots,i_n\le p-1$,
or a term  $b_{y_1}^{j_1} \cdots b_{y_n}^{k_n} $
  for $j_1+\cdots +j_n\le (p-1)/3$ and $0\le j_1,j_2,\ldots,j_n\le
  p-1$, 
or a term $ c_{z_1}^{k_1} \cdots c_{z_n}^{k_n}$
for  $k_1+\cdots+k_n\le (p-1)/3$ and $0\le k_1,k_2,\ldots,k_n\le p-1$.
Hence the slice rank of $f$ is at most $3N$
and so $m\le 3N$.
\end{proof}

\section{Proof of the lemma}\label{sec:proof}
Here we present the proof of our new lemma.
\begin{proof}[Proof of Lemma~\ref{lem:main}]
  We proceed by induction on $k$. 
  If $k=2$, then the slice rank of $f$ is equal to the rank of the corresponding matrix, which is upper triangular 
  and the conclusion follows trivially.
  
  Thus we may assume $k>2$.
  We may also assume that $f(x,x,\ldots,x)\neq0$ for all $x\in A$ because otherwise we can discard such $x$ from $A$. 

  Suppose that the slice rank of $f$ is less than $\abs{A}$. Then there exist disjoint sets $I_1$, $I_2$, $\ldots$, $I_{k}$ of indices 
  and functions $f_{i,\alpha}:A\to \F$ 
  and $g_{i,\alpha}:A^{k-1}\to \F$ for $\alpha\in I_i$
  such that
  $\sum_{i=1}^k \abs{I_i}<\abs{A}$ and 
  \[
    f(x_1,\ldots,x_k)=\sum_{i=1}^k \sum_{\alpha\in I_i} f_{i,\alpha}(x_i) 
    g_{i,\alpha} (x_1,\ldots,x_{i-1},x_{i+1},\ldots,x_k).
  \]
  Let $W$ be a vector space of functions $h:A\to \F$ such that 
  \[\sum_{a\in A} f_{2,\alpha}(a)h(a)=0\]  for all $\alpha\in I_2$.
  Let  $d=\dim W$. Then $d\ge \abs{A}-\abs{I_2}$.
  Let $B$ be a basis of $W$. Then there exists a subset $A'$ of $A$ such that 
  $\abs{A'}=d$ 
  and functions in $B$ restricted on $A'$ are linearly independent.
  Then every function from $A'$ to $\F$ can be extended to a function in $W$
  and therefore there exists a function $h\in W$ such that $h(a)=1$ for all $a\in A'$.
  
  Then 
  \begin{multline*}
    \sum_{x_2\in A} f(x_1,\ldots,x_k) h(x_2)
    =\\ \sum_{i\neq 2} \sum_{\alpha\in I_i}
    f_{i,\alpha}(x_i)\sum_{x_2\in A} g_{i,\alpha} (x_1,\ldots,x_{i-1},x_{i+1},\ldots,x_k) h(x_2).
    \end{multline*}
  Let $f'(x_1,x_3,x_4,\ldots,x_k)= \sum_{x_2\in A} f(x_1,\ldots,x_k) h(x_2)$.
  It is easy to observe that if $f'(x_1,x_3,\ldots,x_k)\neq 0$, then 
  $x_1\preceq x_i\preceq x_k$ for all $i=3,4,\ldots,k-1$.
  Furthermore for each $x\in A'$,  $f'(x,x,\ldots,x)=\sum_{x_2\in A} f(x,x_2,x,x,\ldots,x)h(x_2)
^n  =f(x,x,\ldots,x) h(x)\neq 0$.
  Here we use the assumption that $f(x,x_2,x,\ldots,x)\neq 0$ implies 
  $x\preceq x_2\preceq x$ and $h(x)=1$ for all $x\in A'$.
  Therefore by the induction hypothesis, 
  the slice rank of $f'$ is at least $\abs{A'}$.
  Currently $f'$ is written as a sum of $\abs{I_1}+\abs{I_3}+\cdots+\abs{I_k}$ slices and so 
  \[
    d=\abs{A'}\le \abs{I_1}+\abs{I_3}+\abs{I_4}+\cdots+\abs{I_k}.
  \]
  Then  $\abs{A}\le \abs{I_1}+\abs{I_2}+\cdots+\abs{I_k}$, contradicting the hypothesis.
\end{proof}

\providecommand{\bysame}{\leavevmode\hbox to3em{\hrulefill}\thinspace}
\providecommand{\MR}{\relax\ifhmode\unskip\space\fi MR }
\providecommand{\MRhref}[2]{%
  \href{http://www.ams.org/mathscinet-getitem?mr=#1}{#2}
}
\providecommand{\href}[2]{#2}

\end{document}